\newtheorem{thm}{Theorem}[section]
\newtheorem{cor}[thm]{Corollary}
\newtheorem{lem}[thm]{Lemma}
\newtheorem{example}[thm]{Example}
\journal{~}
\begin{document}
\begin{spacing}{1.15}
\begin{frontmatter}
\title{\textbf{The Minc-type bound and the eigenvalue inclusion sets of the general product of tensors}}

\author[label1]{Chunli Deng}
\author[label2]{Hongmei Yao}
\author[label1,label2]{Changjiang Bu}\ead{buchangjiang@hrbeu.edu.cn}


\address{
\address[label1]{College of Automation, Harbin Engineering University, Harbin 150001, PR China}
\address[label2]{College of Science, Harbin Engineering University, Harbin 150001, PR China}
}

\begin{abstract}
In this paper, we give the  Minc-type bound for spectral radius of nonnegative tensors. We also present the bounds for the spectral radius and the eigenvalue inclusion sets of the general product of tensors.

\end{abstract}

\begin{keyword}
General product of tensors, Spectral radius,  Minc-type bound, Eigenvalue inclusion set\\
\emph{AMS classification:}  15A69, 15A18
\end{keyword}
\end{frontmatter}

\section{Introduction}
Let $\mathbb{C}^{[m,n]}$ ($\mathbb{R}^{[m,n]}_{+}$) be the set of order $m$ dimension $n$ tensors (nonnegative tensors) over complex number field $\mathbb{C}$ (real number field $\mathbb{R}$).
For $\mathcal{A}=(a_{i_{1}i_{2}\cdots i_{m}}) \in {\mathbb{C}^{\left[ {m,n} \right]}}$, denote
\[ {r_i}(\mathcal{A}) =  {\sum\limits_{{i_2}, \ldots ,{i_m} =1}^{n} {{|a_{i{i_2} \cdots{i_m}}|}} } ,~{r(\mathcal{A})} = \mathop {\min }\limits_{i\in[n] } {r_i}(\mathcal{A}), {R(\mathcal{A})} = \mathop {\max }\limits_{i\in[n] }{r_i}(\mathcal{A}),\]
where $[n]=\{1,2,\ldots,n\}$.

For $\mathcal{A}=(a_{i_{1} \cdots i_{m}}) \in \mathbb{C}^{[m,n]}$, $x=(x_{1},\ldots,x_{n})^{\mathrm{T}} $$\in \mathbb{C}^{n}$, $\mathcal{A} x^{m-1}$ is a column vector of dimension $n$, whose the $i$th component is
\[(\mathcal{A} x^{m-1})_{i}=\sum\limits_{{i_2},\ldots ,{i_m} =1}^{n} {{a_{i{i_2} \cdots {i_m}}}{x_{{i_2}}} \cdots {x_{{i_m}}}},~i\in [n] .\]

In 2005, Qi \cite{Qi05} and Lim \cite{lim} proposed the concept of eigenvalue of tensors, independently.
For $\mathcal{A} \in {\mathbb{C}^{\left[ {m,n} \right]}}$, if there exists a number $\lambda\in \mathbb{C}$ and a nonzero  vector $x=(x_{1},\ldots,x_{n})^{\mathrm{T}} \in \mathbb{C}^{n}$ such that
\[\mathcal{A}{x^{m - 1}} = \lambda {x^{\left[ {m - 1} \right]}},\]
then $\lambda$ is called the eigenvalue of $\mathcal{A}$ and $x$ is called the eigenvector of $\mathcal{A}$ corresponding to $\lambda$, where $x^{[m-1]}=(x_{1}^{m-1}, \ldots ,x_{n}^{m-1})^{\mathrm{T}}$.
Let $\sigma(\mathcal{A})$ denote the set of eigenvalues of $\mathcal{A}$. The spectral radius of $\mathcal{A}$ is defined as $\rho(\mathcal{A})=\max\{|\lambda|:\lambda\in\sigma(\mathcal{A})\}$.

In 2013, Shao \cite{shao} introduced a general product of  tensors as follows: Let  $\mathcal{A}=(a_{i_{1}i_{2}\cdots i_{m}})\in \mathbb{C}^{[m,n]}$ $( m\geq 2)$ and $\mathcal{B}=(b_{i_{1}i_{2}\cdots i_{k}})\in \mathbb{C}^{[k,n]}$ $(k\geq1)$, then $\mathcal{A}\mathcal{B}=(c_{i\alpha_{1}\cdots\alpha_{m-1}})$ is
an  order $(m-1)(k-1)+1$  dimension $n$ tensor with entries:
\[c_{{i}\alpha_{1}\ldots\alpha_{m-1}}=\sum\limits_{{i_2}, \cdots ,{i_m} =1}^{n} {{a_{i{i_2} \cdots {i_m}}}{b_{{i_2}{\alpha _1}}} \cdots {b_{{i_m}{\alpha _{m - 1}}}}},i\in[n],\alpha_{1},\ldots, \alpha_{m-1}\in[n]^{k-1},\]
where ${[n]^{k-1}} =\underbrace {[n] \times  \cdots  \times [n]}_{k-1}$.
Clearly, when  $\mathcal{B}=x\in \mathbb{C}^{[1,n]}$, we have $\mathcal{A}x =\mathcal{A} x^{m-1}$. In this paper, we use $\mathcal{A}x$ instead of $\mathcal{A} x^{m-1}$.

A  tensor $\mathcal{A}=(a_{i_{1}i_{2}\cdots i_{m}}) \in \mathbb{R}_{+}^{[m,n]}$ is called  weakly irreducible \cite{yy2}, if for any  nonempty proper  subset $I\subset [n]$, there exist $i_{1},i_{2},\ldots,i_{m}$ satisfied
\[a_{i_{1}i_{2}\ldots i_{m}}>0,~~where~  i_{1}\in I,~~i_{j}\in [n]\setminus I,~j\in\{2,\ldots,m\}.\]

\vspace{1 ex}
In 1988, Minc \cite{minc} gave the bounds for  spectral radius of nonnegative matrices as follows: Let $A=(a_{ij})\in\mathbb{R}_{+}^{[2,n]}$, and $r_{i}(A)\neq 0$,~$i=1,\ldots ,n$. Then
\begin{align}\label{1}
\min_{i\in[n]}(\frac{1}{r_{i}(A)}\sum_{j=1}^{n}a_{ij}r_{j}(A))\leq \rho (A)\leq \max_{i\in[n]}(\frac{1}{r_{i}(A)}\sum_{j=1}^{n}a_{ij}r_{j}(A)) ,
\end{align}
and since $r_{i}(A^{2})=\sum\limits_{j=1}^{n}a_{ij}r_{j}(A)$,  the inequality (\ref{1}) could be written by
\[ \mathop {\min }\limits_{i\in[n] } \frac{{{r_i}({A^2})}}{{{r_i}(A)}} \leq \rho (A) \leq \mathop {\max }\limits_{i \in[n]} \frac{{{r_i}({A^2})}}{{{r_i}(A)}}.\]

In \cite{ch perron}, Chang et al. gave the Perron-Frobenius theorem for nonnegative irreducible tensor $\mathcal{A}$, and  the Collatz-Wielandt Theorem of $\mathcal{A}$ is given as follows:
\begin{align}\label{mini}
 \max\limits_{x\in \mathbb{R}^{n}_{++}} \mathop {\min }\limits_{i\in[n] } \frac{(\mathcal{A}x)_{i}}{x_{i}^{m-1}}=  \rho \left( \mathcal{A} \right) =\min\limits_{x\in \mathbb{R}_{++}^{n}}\mathop {\max }\limits_{i\in[n] } \frac{(\mathcal{A}x)_{i}}{x_{i}^{m-1}}.
\end{align}
 In \cite{s.f perron}, Friedland et al.  also showed the Collatz-Wielandt Theorem  for nonnegative weakly irreducible tensors. There are still some other results on  spectral radius of tensors, see \cite{ch perron,liwensome,sunlizhu}.

In \cite{Qi05}, Qi gave the Ger$\check{s}$gorin-type eigenvalue inclusion sets for real symmetric tensors. In \cite{bu}, Bu et al. gave the Brualdi-type eigenvalue inclusion sets of tensors via digraph.

Inspired by the above, in this paper, we give the  Minc-type bound for spectral radius of nonnegative tensors.  Further,  the generalized  Collatz-Wielandt Theorem for nonnegative weakly irreducible tensors is showed. We also present the bound for the spectral radius and the eigenvalue inclusion sets of the general product of tensors.

\section{Preliminaries}
In this section, we give some  lemmas.
\begin{lem}\label{ss}\cite{shao}
For $\mathcal{A}=(a_{i_{1}i_{2}\cdots i_{m}})\in \mathbb{C}^{[m,n]}$ and an invertible diagonal matrix $D=\mathrm{diag}(d_{11},d_{22},\ldots,d_{nn})$, $\mathcal{B}=D^{-(m-1)}\mathcal{A}D$ is an order $m$ dimension $n$ tensor with entries
\[b_{i_{1}i_{2}\cdots i_{m}}=d_{i_{1}i_{1}}^{-(m-1)}a_{i_{1}i_{2}\cdots i_{m}}d_{i_{2}i_{2}}\cdots d_{i_{m}i_{m}}.\]
In this case, $\mathcal{A}$ and $\mathcal{B}$ are called diagonal similar and $\sigma(\mathcal{A})=\sigma(\mathcal{B})$.
\end{lem}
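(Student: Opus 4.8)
The plan is to establish the two assertions of the lemma in turn: first the explicit entry formula for $\mathcal{B}=D^{-(m-1)}\mathcal{A}D$, and then the spectral identity $\sigma(\mathcal{A})=\sigma(\mathcal{B})$.

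For the entry formula, I would apply the general product definition twice and exploit the fact that $D$ is diagonal. Regarding $D$ as an order-$2$ tensor and using the product rule, $\mathcal{A}D$ is again order $m$ with
\[
(\mathcal{A}D)_{i i_2\cdots i_m}=\sum_{j_2,\ldots,j_m=1}^{n} a_{i j_2\cdots j_m}\, d_{j_2 i_2}\cdots d_{j_m i_m};
\]
since $d_{j_\ell i_\ell}=d_{i_\ell i_\ell}\delta_{j_\ell i_\ell}$ for the diagonal $D$, this collapses to $a_{i i_2\cdots i_m}\, d_{i_2 i_2}\cdots d_{i_m i_m}$. Multiplying on the left by the diagonal matrix $D^{-(m-1)}$, again via the product rule, only scales the first index and introduces the factor $d_{i_1 i_1}^{-(m-1)}$, which yields exactly the stated expression for $b_{i_1 i_2\cdots i_m}$.

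For the spectral identity, suppose $\lambda\in\sigma(\mathcal{A})$ with eigenvector $x\neq 0$, so $\mathcal{A}x^{m-1}=\lambda x^{[m-1]}$. I would set $y=D^{-1}x$, so that $x_i=d_{ii}y_i$, and observe that $y\neq 0$ because $D$ is invertible. Computing $(\mathcal{B}y^{m-1})_i$ directly from the entry formula, the $m-1$ factors $d_{i_2 i_2}\cdots d_{i_m i_m}$ recombine with $y_{i_2}\cdots y_{i_m}$ to reproduce $x_{i_2}\cdots x_{i_m}$, so the sum becomes
\[
(\mathcal{B}y^{m-1})_i=d_{ii}^{-(m-1)}(\mathcal{A}x^{m-1})_i=d_{ii}^{-(m-1)}\lambda x_i^{m-1}=d_{ii}^{-(m-1)}\lambda\, d_{ii}^{m-1}y_i^{m-1}=\lambda y_i^{m-1}.
\]
Hence $\mathcal{B}y^{m-1}=\lambda y^{[m-1]}$ and $\lambda\in\sigma(\mathcal{B})$, giving $\sigma(\mathcal{A})\subseteq\sigma(\mathcal{B})$. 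The reverse inclusion follows by symmetry, replacing $D$ by $D^{-1}$ and using $\mathcal{A}=D^{m-1}\mathcal{B}D^{-1}$.

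The computation is essentially bookkeeping, and the only point demanding care is the exact cancellation between the factor $d_{ii}^{-(m-1)}$ carried by the first index and the $(m-1)$-fold power $d_{ii}^{m-1}$ that arises when $x_i^{m-1}=(d_{ii}y_i)^{m-1}$ is expanded. This cancellation is precisely why the exponent on $D^{-1}$ must be $m-1$ rather than $1$: it matches the degree of homogeneity of the eigenvalue equation $\mathcal{A}x^{m-1}=\lambda x^{[m-1]}$, and I expect verifying this matching of exponents to be the main (if mild) obstacle.
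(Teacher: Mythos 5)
Your proof is correct. The paper does not prove this lemma itself --- it is quoted from Shao's work \cite{shao} --- and your argument (collapsing the general-product sums using the diagonality of $D$ to get the entry formula, then transporting an eigenpair via $y=D^{-1}x$ and checking that the factor $d_{ii}^{-(m-1)}$ cancels against $(d_{ii}y_i)^{m-1}$, with the reverse inclusion by replacing $D$ with $D^{-1}$) is exactly the standard proof of diagonal similarity invariance given there.
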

\begin{lem}\label{pf}\cite{s.f perron}
Let $\mathcal{A}\in \mathbb{R}_{+}^{[m,n]}$ be a weakly irreducible tensor. Then $\rho(\mathcal{A})$ is an eigenvalue of $\mathcal{A}$, and there exists a unique positive eigenvector corresponding to $\rho(\mathcal{A})$ up to a multiplicative constant.
\end{lem}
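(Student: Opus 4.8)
The plan is to prove the three assertions --- that $\rho(\mathcal{A})$ is an eigenvalue, that it admits a positive eigenvector, and that this eigenvector is unique up to scaling --- in that order, reducing the weakly irreducible case to the irreducible Perron--Frobenius theorem of Chang et al.\ \cite{ch perron} by a perturbation argument. For $\epsilon>0$ let $\mathcal{E}$ denote the all-ones tensor in $\mathbb{R}_{+}^{[m,n]}$ and set $\mathcal{A}_{\epsilon}=\mathcal{A}+\epsilon\mathcal{E}$. Since every entry of $\mathcal{A}_{\epsilon}$ is positive, it is irreducible, so by \cite{ch perron} its spectral radius $\rho(\mathcal{A}_{\epsilon})$ is an eigenvalue with a positive eigenvector $x^{(\epsilon)}$, which I normalize so that $\|x^{(\epsilon)}\|=1$. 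By monotonicity of the spectral radius in the entries, $\mathcal{A}_{\epsilon}\geq\mathcal{A}$ gives $\rho(\mathcal{A}_{\epsilon})\geq\rho(\mathcal{A})$, and as $\epsilon\downarrow 0$ these radii decrease to some $\lambda\geq\rho(\mathcal{A})$. Extracting a convergent subsequence of the $x^{(\epsilon)}$ on the compact unit sphere yields a limit $x\geq 0$ with $x\neq 0$, and passing to the limit in $\mathcal{A}_{\epsilon}(x^{(\epsilon)})^{m-1}=\rho(\mathcal{A}_{\epsilon})(x^{(\epsilon)})^{[m-1]}$ gives $\mathcal{A}x^{m-1}=\lambda x^{[m-1]}$. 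Thus $\lambda\in\sigma(\mathcal{A})$, so $\lambda\leq\rho(\mathcal{A})$, forcing $\lambda=\rho(\mathcal{A})$ and establishing that $\rho(\mathcal{A})$ is an eigenvalue with a nonnegative eigenvector $x$.

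Next I would upgrade $x$ to a positive vector using weak irreducibility. Put $I=\{i\in[n]:x_{i}=0\}$. As $x\neq 0$ we have $I\neq[n]$, and if $I$ were a nonempty proper subset, then weak irreducibility would provide indices $i_{1}\in I$ and $i_{2},\ldots,i_{m}\in[n]\setminus I$ with $a_{i_{1}i_{2}\cdots i_{m}}>0$. Then $x_{i_{2}},\ldots,x_{i_{m}}>0$, so
\[(\mathcal{A}x^{m-1})_{i_{1}}\geq a_{i_{1}i_{2}\cdots i_{m}}x_{i_{2}}\cdots x_{i_{m}}>0,\]
contradicting $(\mathcal{A}x^{m-1})_{i_{1}}=\rho(\mathcal{A})x_{i_{1}}^{m-1}=0$. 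Hence $I=\emptyset$ and $x>0$.

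Finally, for uniqueness suppose $x,y>0$ both satisfy the eigen-equation for $\rho:=\rho(\mathcal{A})$. Set $\alpha=\min_{i\in[n]}x_{i}/y_{i}$ and $J=\{i:x_{i}=\alpha y_{i}\}$, which is nonempty. Since $x\geq\alpha y$ componentwise and $x\mapsto\mathcal{A}x^{m-1}$ is order-preserving and homogeneous of degree $m-1$, each $i\in J$ satisfies
\[\rho\,\alpha^{m-1}y_{i}^{m-1}=\rho\,x_{i}^{m-1}=(\mathcal{A}x^{m-1})_{i}\geq\bigl(\mathcal{A}(\alpha y)^{m-1}\bigr)_{i}=\alpha^{m-1}\rho\,y_{i}^{m-1},\]
so equality holds throughout. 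Because every summand $a_{ii_{2}\cdots i_{m}}(x_{i_{2}}\cdots x_{i_{m}}-\alpha^{m-1}y_{i_{2}}\cdots y_{i_{m}})$ is nonnegative and the total is zero, equality forces $x_{i_{j}}=\alpha y_{i_{j}}$ for every index occurring in a positive entry $a_{ii_{2}\cdots i_{m}}$; that is, $J$ is closed under the adjacency induced by the positive entries. If $J\neq[n]$, then applying weak irreducibility with $I=J$ produces a positive entry linking $J$ to its complement, contradicting this closure. Hence $J=[n]$ and $x=\alpha y$.

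The main obstacle is the existence step: one must guarantee that the limiting eigenvalue is exactly $\rho(\mathcal{A})$ rather than merely some eigenvalue dominated by it, and that the limit vector does not collapse to zero. Both points rest on the monotonicity and continuity of the tensor spectral radius as the perturbation is removed, and they are secured here by the normalization $\|x^{(\epsilon)}\|=1$ together with the sandwiching $\rho(\mathcal{A})\leq\lambda\leq\rho(\mathcal{A})$. The positivity and uniqueness steps, by contrast, reduce to a purely combinatorial propagation of equality across coordinates that is furnished directly by the definition of weak irreducibility.
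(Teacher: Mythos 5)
The paper offers no proof of this lemma at all: it is imported as a known result from Friedland, Gaubert and Han \cite{s.f perron}, so there is no in-house argument to compare yours against. Judged on its own terms, your proof is essentially correct and follows the standard route for this theorem: perturb to the positive tensor $\mathcal{A}+\epsilon\mathcal{E}$, invoke the irreducible Perron--Frobenius theorem of Chang et al., use entrywise monotonicity of the spectral radius plus compactness of the unit sphere to extract a nonnegative eigenvector for the limit eigenvalue, and sandwich that eigenvalue to equal $\rho(\mathcal{A})$; then use weak irreducibility twice, once on the zero set $I=\{i:x_i=0\}$ to get positivity, and once on $J=\{i:x_i=\alpha y_i\}$ to propagate equality and get uniqueness. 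The product-equality step ($x_{i_2}\cdots x_{i_m}=\alpha^{m-1}y_{i_2}\cdots y_{i_m}$ with $x_{i_j}\ge\alpha y_{i_j}>0$ forcing componentwise equality) is sound. You should state explicitly that monotonicity of $\rho$ under entrywise domination of nonnegative tensors is itself a nontrivial imported fact (it is in Yang--Yang \cite{yy}).

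One caveat deserves flagging. Your two combinatorial steps lean on the definition of weak irreducibility \emph{as written in this paper}, which supplies a positive entry $a_{i_1i_2\cdots i_m}$ with $i_1\in I$ and \emph{all} of $i_2,\ldots,i_m$ in $[n]\setminus I$ --- that is what lets you conclude $x_{i_2}\cdots x_{i_m}>0$ in the positivity step and what contradicts the closure of $J$ in the uniqueness step. The notion actually used in \cite{s.f perron} (irreducibility of the associated digraph) is weaker: it only guarantees a positive entry with \emph{some} index outside $I$, and under that hypothesis the remaining indices may lie in the zero set and kill the product, so both steps would need a different argument. In other words, your proof establishes the lemma for the stronger, irreducibility-type condition stated in Section 1 of this paper, not for weak irreducibility in the sense of the cited source; since the lemma is attributed to that source, the gap between the two definitions should at least be acknowledged.
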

\begin{lem}\label{bound}\cite{yy}
Let $\mathcal{A}\in \mathbb{R}_{+}^{[m,n]}$. Then
\[r(\mathcal{A})\leq \rho(\mathcal{A})\leq R(\mathcal{A}).\]
\end{lem}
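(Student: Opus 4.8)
The plan is to establish the two inequalities separately, handling the upper bound $\rho(\mathcal{A})\le R(\mathcal{A})$ by a direct Ger\v{s}gorin-type estimate and the lower bound $r(\mathcal{A})\le\rho(\mathcal{A})$ by a perturbation argument that reduces to the Collatz--Wielandt formula (\ref{mini}). For the upper bound I would take an arbitrary eigenvalue $\lambda\in\sigma(\mathcal{A})$ with eigenvector $x\neq 0$, so that $\mathcal{A}x=\lambda x^{[m-1]}$. Choosing an index $i$ with $|x_i|=\max_j|x_j|>0$ and reading off the $i$th coordinate of the eigenequation, the triangle inequality gives $|\lambda||x_i|^{m-1}\le\sum_{i_2,\dots,i_m}|a_{ii_2\cdots i_m}||x_{i_2}|\cdots|x_{i_m}|\le r_i(\mathcal{A})|x_i|^{m-1}$, where the last step uses $|x_{i_j}|\le|x_i|$. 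Dividing by $|x_i|^{m-1}$ yields $|\lambda|\le r_i(\mathcal{A})\le R(\mathcal{A})$, and since $\lambda$ was arbitrary, taking the maximum modulus over $\sigma(\mathcal{A})$ gives $\rho(\mathcal{A})\le R(\mathcal{A})$. This step uses neither irreducibility nor nonnegativity beyond the definition of $R$.

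For the lower bound I would first treat the case where $\mathcal{A}$ is weakly irreducible: applying (\ref{mini}) with the test vector $e=(1,\dots,1)^{\mathrm T}$ and using $(\mathcal{A}e)_i=\sum_{i_2,\dots,i_m}a_{ii_2\cdots i_m}=r_i(\mathcal{A})$ gives $\rho(\mathcal{A})\ge\min_i (\mathcal{A}e)_i/e_i^{m-1}=\min_i r_i(\mathcal{A})=r(\mathcal{A})$. To remove the irreducibility hypothesis I would perturb $\mathcal{A}$ to $\mathcal{A}_{\varepsilon}=\mathcal{A}+\varepsilon\mathcal{J}$, where $\mathcal{J}$ is the all-ones tensor and $\varepsilon>0$. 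Then $\mathcal{A}_{\varepsilon}$ is entrywise positive, hence weakly irreducible, so the previous step applies and yields $\rho(\mathcal{A}_{\varepsilon})\ge r(\mathcal{A}_{\varepsilon})=r(\mathcal{A})+\varepsilon n^{m-1}$. Letting $\varepsilon\to 0^{+}$ and invoking continuity of the spectral radius then delivers $\rho(\mathcal{A})\ge r(\mathcal{A})$.

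The main obstacle is precisely this final limiting step. Monotonicity of $\rho$ on nonnegative tensors only gives $\rho(\mathcal{A})\le\rho(\mathcal{A}_{\varepsilon})$, which is the wrong direction, so I genuinely need the two-sided statement $\rho(\mathcal{A}_{\varepsilon})\to\rho(\mathcal{A})$ as $\varepsilon\to 0^{+}$. I would justify this by recalling that the eigenvalues of a tensor are the roots of its characteristic polynomial, whose degree depends only on $m$ and $n$ and whose coefficients are polynomials in the entries; because the leading coefficient is a fixed nonzero constant, the roots vary continuously with the coefficients, and hence $\rho(\mathcal{A})$, being the maximal modulus of those roots, is continuous in the entries of $\mathcal{A}$. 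Making this continuity argument airtight---rather than the essentially formal Ger\v{s}gorin and Collatz--Wielandt computations---is where the real care is needed.
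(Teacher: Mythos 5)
The paper does not prove this lemma at all --- it is quoted verbatim from the cited reference of Yang and Yang --- so there is no internal proof to compare against; judged on its own, your argument is correct and is essentially the standard proof from that reference. The upper bound via the maximal-modulus coordinate of an eigenvector is airtight (and, as you note, needs no nonnegativity). The lower bound via the Collatz--Wielandt formula \eqref{mini} applied to $\mathcal{A}_{\varepsilon}=\mathcal{A}+\varepsilon\mathcal{J}$ with the test vector $e$ is also fine, and you correctly flag that the only delicate point is the limit $\rho(\mathcal{A}_{\varepsilon})\to\rho(\mathcal{A})$, since monotonicity runs the wrong way. Your resultant-based justification works: the characteristic polynomial $\det(\mathcal{A}-\lambda\mathcal{I})$ has degree $n(m-1)^{n-1}$ with constant nonzero leading coefficient and coefficients polynomial in the entries, and Qi's theorem identifies its root set with $\sigma(\mathcal{A})$ for arbitrary (not just symmetric) tensors, so $\rho$ is continuous. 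If you prefer to avoid the resultant machinery, the same limit follows more elementarily: $\rho(\mathcal{A}_{\varepsilon})$ is nonincreasing as $\varepsilon\downarrow 0$ and bounded below by $\rho(\mathcal{A})$, so $L=\lim_{\varepsilon\to 0^{+}}\rho(\mathcal{A}_{\varepsilon})$ exists; taking unit positive eigenvectors $x_{\varepsilon}$ of $\mathcal{A}_{\varepsilon}$ and passing to a convergent subsequence produces a nonzero $x\geq 0$ with $\mathcal{A}x^{m-1}=Lx^{[m-1]}$, whence $L\in\sigma(\mathcal{A})$ and $L\leq\rho(\mathcal{A})$, forcing $L=\rho(\mathcal{A})$. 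Either route closes the gap you identified.
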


\begin{lem}\label{2.4}
Let $\mathcal{A}=(a_{i_{1}i_{2}\cdots i_{m}})\in \mathbb{C}^{[m,n]}$, $ \mathcal{B}=(b_{i_{1}i_{2}\cdots i_{k}})\in \mathbb{C}^{[k,n]}$, $m\geq2,~k\geq1$.  Then\\$(1)$
\[r_{i}(\mathcal{A}\mathcal{B})\leq  {r_{i}(\mathcal{A})}(R(\mathcal{B}))^{m-1},i\in [n].\]
$(2)$If $\mathcal{A} $ and $\mathcal{B}$ are nonnegative tensors,
\[r_{i}(\mathcal{A}\mathcal{B})= \sum_{i_{2},\ldots,i_{m}=1}^{n}a_{ii_{2}\cdots i_{m}}r_{i_{2}}(\mathcal{B})\cdots r_{i_{m}}(\mathcal{B}),i\in [n].\]

\end{lem}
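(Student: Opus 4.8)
The plan is to compute $r_i(\mathcal{A}\mathcal{B})$ directly from its definition as the row sum of absolute values over all non-initial indices of the product tensor, and then reorganize the resulting multiple sum so that the dependence on $\mathcal{B}$ factors into row sums $r_{i_j}(\mathcal{B})$. Recall that $\mathcal{A}\mathcal{B}=(c_{i\alpha_1\cdots\alpha_{m-1}})$ has order $(m-1)(k-1)+1$, so by definition
\[
r_i(\mathcal{A}\mathcal{B})=\sum_{\alpha_1,\ldots,\alpha_{m-1}\in[n]^{k-1}}\left|\,\sum_{i_2,\ldots,i_m=1}^{n}a_{ii_2\cdots i_m}\,b_{i_2\alpha_1}\cdots b_{i_m\alpha_{m-1}}\right|.
\]
First I would apply the triangle inequality to pull the absolute value inside the inner sum, obtaining the upper bound
\[
r_i(\mathcal{A}\mathcal{B})\le\sum_{\alpha_1,\ldots,\alpha_{m-1}}\,\sum_{i_2,\ldots,i_m=1}^{n}|a_{ii_2\cdots i_m}|\,|b_{i_2\alpha_1}|\cdots|b_{i_m\alpha_{m-1}}|.
\]

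Next I would interchange the order of summation, fixing $i_2,\ldots,i_m$ and summing first over the multi-indices $\alpha_1,\ldots,\alpha_{m-1}$. The crucial structural observation is that each $\alpha_j$ appears in exactly one factor $b_{i_{j+1}\alpha_j}$, so the sum over $(\alpha_1,\ldots,\alpha_{m-1})$ splits as a product of independent sums:
\[
\sum_{\alpha_1,\ldots,\alpha_{m-1}}|b_{i_2\alpha_1}|\cdots|b_{i_m\alpha_{m-1}}|=\prod_{j=2}^{m}\Big(\sum_{\alpha_{j-1}\in[n]^{k-1}}|b_{i_j\alpha_{j-1}}|\Big)=r_{i_2}(\mathcal{B})\cdots r_{i_m}(\mathcal{B}),
\]
since $\sum_{\alpha\in[n]^{k-1}}|b_{i_j\alpha}|$ is precisely the definition of $r_{i_j}(\mathcal{B})$. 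Substituting this back yields
\[
r_i(\mathcal{A}\mathcal{B})\le\sum_{i_2,\ldots,i_m=1}^{n}|a_{ii_2\cdots i_m}|\,r_{i_2}(\mathcal{B})\cdots r_{i_m}(\mathcal{B}).
\]

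To finish part $(1)$, I would bound each $r_{i_j}(\mathcal{B})\le R(\mathcal{B})$, which produces the factor $(R(\mathcal{B}))^{m-1}$ and leaves $\sum_{i_2,\ldots,i_m}|a_{ii_2\cdots i_m}|=r_i(\mathcal{A})$, as required. Part $(2)$ then follows from the same computation: when $\mathcal{A}$ and $\mathcal{B}$ are nonnegative every entry is nonnegative, so the triangle-inequality step incurs no loss (no cancellation is possible) and the absolute values may be dropped, turning the displayed inequality into the claimed identity. The only genuine care needed is the multi-index bookkeeping — correctly verifying that the sum over $\alpha_1,\ldots,\alpha_{m-1}$ factors so that each $\mathcal{B}$-factor contributes exactly one row sum $r_{i_j}(\mathcal{B})$ — which is the one place I would guard against index errors.
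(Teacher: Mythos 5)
Your proposal is correct and follows essentially the same route as the paper's proof: expand $r_i(\mathcal{A}\mathcal{B})$ from the definition, apply the triangle inequality, factor the sum over the multi-indices $\alpha_1,\ldots,\alpha_{m-1}$ into the product $r_{i_2}(\mathcal{B})\cdots r_{i_m}(\mathcal{B})$, bound each factor by $R(\mathcal{B})$ for part (1), and observe that nonnegativity turns the inequality into an equality for part (2). No gaps.
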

\begin{proof}

By general tensor product, we know  $\mathcal{A}\mathcal{B}=(c_{i\alpha_{1}\cdots\alpha_{m-1}})$ is an  order $(m-1)(n-1)+1$ dimension $n$ tensor, where
\[c_{{i}\alpha_{1}\cdots\alpha_{m-1}}=\sum\limits_{{i_2}, \ldots ,{i_m} =1}^{n} {{a_{i{i_2} \cdots {i_m}}}{b_{{i_2}{\alpha _1}}} \cdots {b_{{i_m}{\alpha _{m - 1}}}}}, ~\alpha_{j}\in [n]^{k-1},~j\in[m-1].\]
Then, for all $i\in [n]$, we have
\begin{eqnarray*}
{r_i}\left( {{\mathcal{A}\mathcal{B}}} \right) &=& \sum\limits_{{\alpha _1}, \ldots,{\alpha _{m - 1}} \in {{\left[ n \right]}^{k - 1}}} {\left| {{c_{i{\alpha _1} \cdots {\alpha _{m - 1}}}}} \right|} {\kern 1pt} {\kern 1pt} {\kern 1pt} {\kern 1pt} {\kern 1pt} {\kern 1pt} {\kern 1pt}  \\
&=& \sum\limits_{{\alpha _1},\ldots ,{\alpha _{m - 1}} \in {{\left[ n \right]}^{k - 1}}} {\left| {\sum\limits_{{i_2}, \ldots ,{i_m} =1}^{n} {{a_{i{i_2} \cdots {i_m}}}{b_{{i_2}{\alpha _1}}} \cdots } {b_{{i_m}{\alpha _{m - 1}}}}} \right|} \\
&\leq&
{\sum\limits_{{i_2}, \ldots ,{i_m} =1}^{n} {\left( {\sum\limits_{{\alpha _1},\ldots ,{\alpha _{m - 1}} \in {{\left[ n \right]}^{k - 1}}} {\left| {{a_{i{i_2} \cdots {i_m}}}} \right|\left| {{b_{{i_2}{\alpha _1}}}} \right| \cdots } \left| {{b_{{i_m}{\alpha _{m - 1}}}}} \right|} \right)} }\\
&=&\left( {\sum\limits_{{i_2}, \ldots,{i_m} =1}^{n} {\left| {{a_{i{i_2} \cdots {i_m}}}} \right|} } \left( {\sum\limits_{{\alpha _1},\ldots ,{\alpha _{m - 1}} \in {{\left[ n \right]}^{k - 1}}} {\left| {{b_{{i_2}{\alpha _1}}}} \right| \cdots } \left| {{b_{{i_m}{\alpha _{m - 1}}}}} \right|} \right)\right) \hfill ,\\
\end{eqnarray*}
Since
\begin{eqnarray*}
{\sum\limits_{{\alpha _1},\ldots,{\alpha _{m - 1}} \in {{\left[ n \right]}^{k - 1}}} {\left| {{b_{{i_2}{\alpha _1}}}} \right| \cdots } \left| {{b_{{i_m}{\alpha _{m - 1}}}}} \right|}
 &=&
(\sum\limits_{{\alpha _1} \in {{[n]}^{k - 1}}} {\left| {{b_{{i_2}{\alpha _1}}}} \right|}) (\sum\limits_{{\alpha _2} \in {{[n]}^{k - 1}}} {\left| {{b_{{i_3}{\alpha _2}}}} \right|})\\
&\cdots& ( \sum\limits_{{\alpha _{k - 1}} \in {{[n]}^{k - 1}}} {\left| {{b_{{i_m}{\alpha _{m - 1}}}}} \right|}) \\
&=&r_{i_{2}}(\mathcal{B})r_{i_{3}}(\mathcal{B}) \cdots r_{i_{m}}(\mathcal{B}) \\
&\leq&(R(\mathcal{B}))^{m-1},
\end{eqnarray*}
we have
\begin{eqnarray*}
{r_i}\left( {{\mathcal{A}\mathcal{B}}} \right) &\leq &\sum\limits_{{i_2}, \ldots ,{i_m} =1}^{n} {\left| {{a_{i{i_2} \cdots {i_m}}}} \right|}r_{i_{2}}(\mathcal{B})r_{i_{3}}(\mathcal{B}) \cdots r_{i_{m}}(\mathcal{B})\\
&\leq&(R(\mathcal{B}))^{m-1} \sum\limits_{{i_2}, \ldots ,{i_m} =1}^{n} {\left| {{a_{i{i_2} \cdots {i_m}}}} \right|}\\
&=&{r_{i}(\mathcal{A})}(R(\mathcal{B}))^{m-1}.
\end{eqnarray*}
So (1) of this lemma holds.

When $\mathcal{A} $ and $\mathcal{B}$ are nonnegative tensors, clearly, we obtain
\[r_{i}(\mathcal{A}\mathcal{B})= \sum_{i_{2},\ldots,i_{m}=1}^{n}a_{ii_{2}\cdots i_{m}}r_{i_{2}}(\mathcal{B})\cdots r_{i_{m}}(\mathcal{B}),\]
Thus we prove (2).
\end{proof}
\begin{cor}
Let $\mathcal{A}\in \mathbb{C}^{[m,n]}$. Then
\[R(\mathcal{A}^{k})\leq (R(\mathcal{A}))^{\mu_{k}}.\]
where  $\mu_{k}  = \left\{ {\begin{array}{*{20}{c}}
  {\begin{array}{*{20}{c}}
  {{{\frac{{{{(m - 1)}^k} - 1}}{{m - 2}}}}}&{,m > 2},
\end{array}} \\
  {\begin{array}{*{20}{c}}
  {{k}}&~~~~~~~~{,m = 2}.
\end{array}}
\end{array}} \right.$
\end{cor}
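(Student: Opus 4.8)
The plan is to prove the bound by induction on $k$, exploiting the tower structure of the power $\mathcal{A}^{k}$ together with part $(1)$ of Lemma~\ref{2.4}. First I would do the order bookkeeping. Defining the power by left multiplication, $\mathcal{A}^{k+1}=\mathcal{A}\mathcal{A}^{k}$, one checks by induction that $\mathcal{A}^{k}$ has order $(m-1)^{k}+1$: indeed $\mathcal{A}^{1}=\mathcal{A}$ has order $m=(m-1)^{1}+1$, and if $\mathcal{A}^{k}$ has order $(m-1)^{k}+1$, then the general product gives order $(m-1)\bigl((m-1)^{k}+1-1\bigr)+1=(m-1)^{k+1}+1$. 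The key structural fact is that in every such product the \emph{left} factor is $\mathcal{A}$, an order $m$ tensor, so the exponent produced by Lemma~\ref{2.4}$(1)$ stays fixed at $m-1$ at each step.

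Next I would apply Lemma~\ref{2.4}$(1)$ to the product $\mathcal{A}\mathcal{A}^{k}$, taking the second factor $\mathcal{B}=\mathcal{A}^{k}$. For each $i\in[n]$ this gives
\[
r_{i}(\mathcal{A}^{k+1})=r_{i}(\mathcal{A}\mathcal{A}^{k})\leq r_{i}(\mathcal{A})\bigl(R(\mathcal{A}^{k})\bigr)^{m-1},
\]
and taking the maximum over $i$ yields the one-step recursion
\[
R(\mathcal{A}^{k+1})\leq R(\mathcal{A})\bigl(R(\mathcal{A}^{k})\bigr)^{m-1}.
\]

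Then I would close the induction. The base case $k=1$ is immediate since $\mu_{1}=1$ in both branches (for $m>2$, $\mu_{1}=\frac{(m-1)-1}{m-2}=1$), so that $R(\mathcal{A}^{1})=(R(\mathcal{A}))^{\mu_{1}}$. Assuming $R(\mathcal{A}^{k})\leq (R(\mathcal{A}))^{\mu_{k}}$ and substituting into the recursion gives
\[
R(\mathcal{A}^{k+1})\leq (R(\mathcal{A}))^{1+(m-1)\mu_{k}},
\]
so it remains only to verify the exponent recurrence $\mu_{k+1}=1+(m-1)\mu_{k}$. For $m=2$ this reads $k+1=1+k$, and for $m>2$ a direct computation gives $1+(m-1)\frac{(m-1)^{k}-1}{m-2}=\frac{(m-1)^{k+1}-1}{m-2}=\mu_{k+1}$, which completes the inductive step.

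The calculations are routine; the care needed is conceptual rather than computational, so this should be regarded as an easy corollary. The main point is to fix the convention for the power $\mathcal{A}^{k}$ and apply Lemma~\ref{2.4}$(1)$ consistently: using left multiplication $\mathcal{A}^{k+1}=\mathcal{A}\mathcal{A}^{k}$ keeps the first factor of order $m$ throughout, so the exponent in Lemma~\ref{2.4}$(1)$ is the constant $m-1$ and the resulting recursion is the cleanest route to the stated closed form $\mu_{k}$. The only edge case worth a remark is $R(\mathcal{A})=0$, in which case $\mathcal{A}=0$, both sides vanish, and the inequality holds trivially.
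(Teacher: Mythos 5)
Your proof is correct and is exactly the argument the paper intends: the corollary is stated without proof immediately after Lemma~\ref{2.4}, and the natural route is to iterate part $(1)$ with $\mathcal{B}=\mathcal{A}^{k}$ to get $R(\mathcal{A}^{k+1})\leq R(\mathcal{A})\bigl(R(\mathcal{A}^{k})\bigr)^{m-1}$ and then check the exponent recurrence $\mu_{k+1}=1+(m-1)\mu_{k}$, which is precisely what you do. Your order bookkeeping and the verification of the closed form for $\mu_{k}$ in both cases $m=2$ and $m>2$ are accurate.
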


\section{The Minc-type bound for spectral radius of nonnegative tensors }

In this section, we give the bounds for spectral radius of nonnegative tensors and the generalized Collatz-Wielandt Theorem of  nonnegative weakly irreducible tensors.
\begin{thm}\label{mincb}
Let $\mathcal{A}\in{\mathbb{R}^{[m,n]}_{+}}$, $\mathcal{B}\in{\mathbb{R}^{[k,n]}_{+}}$, and $r_{i}(\mathcal{B})\neq0$, $i \in [n]$. Then
\[\mathop {\min }\limits_{i\in[n] } \frac{{{r_i}\left( {{\mathcal{A}\mathcal{B}}} \right)}}{({{r_i}\left( \mathcal{B} \right)})^{m-1}}\leq \rho \left( \mathcal{A} \right) \leq \mathop {\max }\limits_{i \in[n]} \frac{{{r_i}\left( {{\mathcal{A}\mathcal{B}}} \right)}}{({{r_i}\left( \mathcal{B} \right)})^{m-1}}.\]
\end{thm}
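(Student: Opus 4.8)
The plan is to reduce the statement to the elementary row-sum bound of Lemma~\ref{bound} by passing to a diagonally similar tensor. First I would set $D=\mathrm{diag}(r_{1}(\mathcal{B}),\ldots,r_{n}(\mathcal{B}))$; since $\mathcal{B}$ is nonnegative and each $r_{i}(\mathcal{B})\neq 0$, the quantities $r_{i}(\mathcal{B})$ are strictly positive, so $D$ is an invertible (indeed positive) diagonal matrix. By Lemma~\ref{ss}, the tensor $\tilde{\mathcal{A}}=D^{-(m-1)}\mathcal{A}D$ is diagonal similar to $\mathcal{A}$, whence $\sigma(\tilde{\mathcal{A}})=\sigma(\mathcal{A})$ and in particular $\rho(\tilde{\mathcal{A}})=\rho(\mathcal{A})$.

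Next I would compute the row sums of $\tilde{\mathcal{A}}$. Its entries are
\[\tilde{a}_{ii_{2}\cdots i_{m}}=\frac{a_{ii_{2}\cdots i_{m}}\,r_{i_{2}}(\mathcal{B})\cdots r_{i_{m}}(\mathcal{B})}{(r_{i}(\mathcal{B}))^{m-1}},\]
which are all nonnegative because $\mathcal{A}$ and $\mathcal{B}$ are nonnegative and $D$ is positive. Summing over $i_{2},\ldots,i_{m}$ and invoking the identity of Lemma~\ref{2.4}(2) for the nonnegative pair $\mathcal{A},\mathcal{B}$ gives
\[r_{i}(\tilde{\mathcal{A}})=\frac{1}{(r_{i}(\mathcal{B}))^{m-1}}\sum_{i_{2},\ldots,i_{m}=1}^{n}a_{ii_{2}\cdots i_{m}}r_{i_{2}}(\mathcal{B})\cdots r_{i_{m}}(\mathcal{B})=\frac{r_{i}(\mathcal{A}\mathcal{B})}{(r_{i}(\mathcal{B}))^{m-1}}.\]

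Finally, since $\tilde{\mathcal{A}}$ is a nonnegative tensor, Lemma~\ref{bound} yields $r(\tilde{\mathcal{A}})\leq\rho(\tilde{\mathcal{A}})\leq R(\tilde{\mathcal{A}})$, that is, $\min_{i\in[n]}r_{i}(\tilde{\mathcal{A}})\leq\rho(\tilde{\mathcal{A}})\leq\max_{i\in[n]}r_{i}(\tilde{\mathcal{A}})$. Substituting $\rho(\tilde{\mathcal{A}})=\rho(\mathcal{A})$ together with the row-sum expression just derived produces precisely the claimed two-sided bound.

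The argument is essentially bookkeeping, and I do not anticipate a genuine obstacle; the one step that must be gotten right is the choice of the scaling matrix $D$ with diagonal entries $r_{i}(\mathcal{B})$, coupled with the recognition that Lemma~\ref{2.4}(2) converts the scaled row sum into $r_{i}(\mathcal{A}\mathcal{B})/(r_{i}(\mathcal{B}))^{m-1}$. It is worth double-checking that the positivity of the $r_{i}(\mathcal{B})$, guaranteed by the hypothesis $r_{i}(\mathcal{B})\neq 0$ for nonnegative $\mathcal{B}$, is used twice: to make $D$ invertible so that Lemma~\ref{ss} applies, and to keep $\tilde{\mathcal{A}}$ nonnegative so that Lemma~\ref{bound} applies.
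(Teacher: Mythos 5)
Your proposal is correct and follows essentially the same route as the paper: the same scaling matrix $D=\mathrm{diag}(r_{1}(\mathcal{B}),\ldots,r_{n}(\mathcal{B}))$, diagonal similarity via Lemma~\ref{ss}, the row-sum bound of Lemma~\ref{bound}, and the identification of the scaled row sums with $r_{i}(\mathcal{AB})/(r_{i}(\mathcal{B}))^{m-1}$ via Lemma~\ref{2.4}(2). Your explicit remarks on where the positivity of the $r_{i}(\mathcal{B})$ is used are a small clarity improvement over the paper's write-up.
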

\begin{proof}
Let $D = \mathrm{diag}\left( {{r_1}\left( \mathcal{B} \right),{r_2}\left( \mathcal{B} \right), \ldots ,{r_n}\left( \mathcal{B} \right)} \right)$ be an invertible diagonal matrix. By Lemma \ref{ss}, ${{D^{ - (m-1)}}\mathcal{A}D}$ and  $\mathcal{A}$ are diagonal similar, then $ \rho \left( \mathcal{A} \right) = \rho \left( {{D^{ - (m-1)}}\mathcal{A}D} \right)$. From Lemma \ref{bound}, we get
\[  {r}\left( {{D^{ - (m-1)}}\mathcal{A}D} \right)\leq \rho \left( \mathcal{A} \right) = \rho \left( {{D^{ - (m-1)}}\mathcal{A}D} \right) \leq  {R}\left( {{D^{ - (m-1)}}\mathcal{A}D} \right) .\]
Using the general product of tensors, we have
\[{\left( {{D^{ - (m-1)}}\mathcal{A}D} \right)_{{i_1}{i_2} \cdots {i_m}}} = {a_{{i_1}{i_2} \cdots {i_m}}}{\left( {{r_{{i_1}}}\left( \mathcal{B} \right)} \right)^{ - (m-1)}}{r_{{i_2}}}\left( \mathcal{B} \right) \cdots {r_{{i_m}}}\left( \mathcal{B} \right),\]
then
\begin{eqnarray*}
  {r_i}\left( {{D^{ - (m-1)}}\mathcal{A}D} \right) &=& \sum\limits_{{i_2}, \ldots ,{i_m} =1}^{n} {{a_{i{i_2} \cdots {i_m}}}{{\left( {{r_i}\left( \mathcal{B} \right)} \right)}^{ - (m-1)}}{r_{{i_2}}}\left(\mathcal{ B} \right) \cdots {r_{{i_m}}}\left( \mathcal{B }\right)}  \hfill \\
   &=&\frac{{\sum\limits_{{i_2}, \ldots ,{i_m} =1}^{n} {{a_{i{i_2} \cdots {i_m}}}{r_{{i_2}}}\left( \mathcal{B} \right) \cdots {r_{{i_m}}}\left( \mathcal{B} \right)} }}{({{r_i}\left( \mathcal{B} \right)})^{m-1}} \hfill .\\
\end{eqnarray*}
Thus
\begin{align}\label{3}
\mathop {\min }\limits_{i \in[n]} \frac{{\sum\limits_{{i_2}, \ldots ,{i_m} =1}^{n} {{a_{i{i_2} \cdots {i_m}}}\prod\limits_{j=2}^{m}{r_{{i_j}}}\left( \mathcal{B} \right)  } }}{({{r_i}\left( \mathcal{B} \right)})^{m-1}}\leq \rho \left( \mathcal{A} \right) \leq \mathop {\max }\limits_{i \in[n]}  \frac{{\sum\limits_{{i_2}, \ldots ,{i_m} =1}^{n} {{a_{i{i_2} \cdots {i_m}}}\prod\limits_{j=2}^{m}{r_{{i_j}}}\left( \mathcal{B} \right)  } }}{({{r_i}\left( \mathcal{B} \right)})^{m-1}}.
\end{align}
By Lemma \ref{2.4}, it yields
\[r_{i}(\mathcal{A}\mathcal{B})= \sum\limits_{i_{2},\ldots,i_{m}=1}^{n}a_{ii_{2}\cdots i_{m}}r_{i_{2}}(\mathcal{B})\cdots r_{i_{m}}(\mathcal{B}),\]
combining inequality (\ref{3}), we have
\[
\mathop {\min }\limits_{i\in[n] } \frac{{{r_i}\left( {{\mathcal{A}\mathcal{B}}} \right)}}{({{r_i}\left( \mathcal{B} \right)})^{m-1}}\leq \rho \left( \mathcal{A} \right) \leq \mathop {\max }\limits_{i \in[n]} \frac{{{r_i}\left( {{\mathcal{A}\mathcal{B}}} \right)}}{({{r_i}\left( \mathcal{B} \right)})^{m-1}}.
\]
\end{proof}
\noindent\textbf{Remark }:In Theorem \ref{mincb}, we consider  two cases as follows.\\
\textbf{Case 1}. When $k=1$, let $\mathcal{B}=x=(x_{1},\ldots,x_{n})\in{\mathbb{R}^{n}_{++}}$ ( ${\mathbb{R}_{++}}$ is the set of positive  numbers),  $r_{i}(\mathcal{A}x)$ and $r_{i}(x)$  be written by $(\mathcal{A}x)_{i}$ and $x_{i}$, respectively. Thus we obtain
\[\mathop {\min }\limits_{i \in [n]} \frac{{{{(\mathcal{A}x)}_i}}}{{x_i^{m - 1}}} \leqslant \rho (\mathcal{A}) \leqslant \mathop {\max }\limits_{i \in [n]} \frac{{{{(\mathcal{A}x)}_i}}}{{x_i^{m - 1}}},\]
which is given in \cite{yy}.\\
\textbf{Case 2}. When $k=2$, let  $\mathcal{B}$ be an identity matrix, then
\[ r(\mathcal{A}) \leq \rho(\mathcal{A})\leq R(\mathcal{A}),\]
which is also given in \cite{yy}.

\vspace{2 ex}
If we set $\mathcal{B}=\mathcal{A}$ in Theorem \ref{mincb}, we obtain the following theorem, which extends the Minc-type bound for spectral radius  to tensors.
\begin{thm}\label{3.2}
Let $\mathcal{A}=(a_{i_{1}i_{2}\cdots i_{m}})\in{\mathbb{R}^{[m,n]}_{+}}$. If $r_{i}(\mathcal{A})\neq0$, for all $i\in [n] $, then
\[\mathop {\min }\limits_{i\in[n] } \frac{{{r_i}\left( {{\mathcal{A}^2}} \right)}}{({{r_i}\left( \mathcal{A} \right)})^{m-1}}\leq \rho \left( \mathcal{A} \right) \leq \mathop {\max }\limits_{i \in[n]} \frac{{{r_i}\left( {{\mathcal{A}^2}} \right)}}{({{r_i}\left( \mathcal{A} \right)})^{m-1}}.\]
\end{thm}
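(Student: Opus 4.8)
The plan is to obtain Theorem \ref{3.2} as a direct specialization of Theorem \ref{mincb}. The key observation is that Theorem \ref{mincb} imposes no relation tying the order $k$ of $\mathcal{B}$ to the order $m$ of $\mathcal{A}$: the tensor $\mathcal{B}$ is allowed to be any nonnegative tensor of dimension $n$ (of arbitrary order $k \geq 1$) whose row sums $r_i(\mathcal{B})$ are all nonzero. In particular, nothing prevents the choice $\mathcal{B} = \mathcal{A}$, for which $k = m$. So I would simply substitute $\mathcal{B} = \mathcal{A}$ into Theorem \ref{mincb} and read off the conclusion.

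First I would verify that the hypotheses transfer. Since $\mathcal{A} \in \mathbb{R}_{+}^{[m,n]}$ and, by assumption, $r_i(\mathcal{A}) \neq 0$ for every $i \in [n]$, the choice $\mathcal{B} = \mathcal{A}$ satisfies exactly the requirements ``$\mathcal{B} \in \mathbb{R}_{+}^{[k,n]}$ with $r_i(\mathcal{B}) \neq 0$'' demanded by Theorem \ref{mincb}. Next I would identify the general product: by the definition of the general product of tensors, $\mathcal{A}\mathcal{B} = \mathcal{A}\mathcal{A} = \mathcal{A}^2$, and correspondingly $(r_i(\mathcal{B}))^{m-1} = (r_i(\mathcal{A}))^{m-1}$. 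Feeding these two identities into the two-sided bound supplied by Theorem \ref{mincb} gives
\[\mathop {\min }\limits_{i\in[n]} \frac{{r_i}(\mathcal{A}^2)}{({r_i}(\mathcal{A}))^{m-1}} \leq \rho(\mathcal{A}) \leq \mathop {\max }\limits_{i\in[n]} \frac{{r_i}(\mathcal{A}^2)}{({r_i}(\mathcal{A}))^{m-1}},\]
which is precisely the asserted inequality.

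Because this is a clean specialization rather than a new argument, I do not anticipate a genuine obstacle. The only points requiring a moment's care are the bookkeeping facts that the general product with $\mathcal{B} = \mathcal{A}$ really does coincide with the tensor square $\mathcal{A}^2$ used in the statement, and that Theorem \ref{mincb} carries no hidden constraint forcing $k \neq m$. Once these are confirmed, the substitution is immediate and the proof concludes.
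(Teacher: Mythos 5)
Your proposal is correct and matches the paper exactly: the authors obtain Theorem \ref{3.2} precisely by setting $\mathcal{B}=\mathcal{A}$ in Theorem \ref{mincb}, just as you do. Your additional checks that the hypotheses transfer and that $\mathcal{A}\mathcal{A}=\mathcal{A}^2$ are sound and only make the specialization more explicit.
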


\begin{example}
Let $\mathcal{A} \in {\mathbb{R}_{+}^{\left[ {3,2} \right]}}$, where $a_{111}=3, ~a_{112}=1,~ a_{121}=2,~ a_{122}=1, ~a_{211}=0, ~a_{212}=4, a_{221}=2,~ a_{222}=3 $. Then
 \[r_{1}(\mathcal{A})=7,~ r_{2}(\mathcal{A})=9,~ r_{1}(\mathcal{A}^{2})=417,~ r_{2}(\mathcal{A}^{2})=621,\]
\[\frac{r_{1}(\mathcal{A}^{2})}{(r_{1}(\mathcal{A}))^{2}}=\frac{417}{49},~\frac{r_{2}(\mathcal{A}^{2})}{(r_{2}(\mathcal{A}))^{2}}=\frac{621}{81}.\]
From Lemma $\ref{bound}$, we have
\[7\leq \rho(\mathcal{A})\leq 9.\]
From Theorem $\ref{3.2}$, we have
\[7.6667\approx\frac{621}{81}\leq \rho(\mathcal{A})\leq \frac{417}{49}\approx8.5102.\]
This example shows that the bound in Theorem $\ref{3.2}$ is better than the bound in Lemma $\ref{bound}$ for some tensors.

\end{example}

\vspace{1 ex}
We take $\mathcal{B}=\mathcal{A}^{k}$ in Theorem \ref{mincb}, we obtain the following theorem.

\begin{thm}\label{3.4}
Let $\mathcal{A}=(a_{i_{1}i_{2}\cdots i_{m}})\in \mathbb{R}^{[m,n]}_{+}$. If $r_{i}(\mathcal{A})\neq0$,  for all $i\in[n] $, then
\[\mathop {\min }\limits_{i \in[n]} \frac{{{r_i}\left( {{\mathcal{A}^{k+1}}} \right)}}{({{r_i}\left( \mathcal{A}^{k} \right)})^{m-1}}\leq \rho \left( \mathcal{A} \right) \leq \mathop {\max }\limits_{i\in[n] } \frac{{{r_i}\left( {{\mathcal{A}^{k+1}}} \right)}}{({{r_i}\left( \mathcal{A}^{k} \right)})^{m-1}}.\]
\end{thm}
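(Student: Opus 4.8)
The plan is to derive Theorem~\ref{3.4} as a direct specialization of Theorem~\ref{mincb}, taking $\mathcal{B}=\mathcal{A}^{k}$. Since $\mathcal{A}\mathcal{A}^{k}=\mathcal{A}^{k+1}$ by the definition of the general product, substituting $\mathcal{B}=\mathcal{A}^{k}$ into the conclusion of Theorem~\ref{mincb} produces precisely the asserted two-sided bound, with $r_{i}(\mathcal{A}\mathcal{B})=r_{i}(\mathcal{A}^{k+1})$ in the numerators and $(r_{i}(\mathcal{B}))^{m-1}=(r_{i}(\mathcal{A}^{k}))^{m-1}$ in the denominators. I would stress here that the exponent $m-1$ comes from the order $m$ of $\mathcal{A}$ (the left factor), which is unaffected by raising $\mathcal{A}$ to a power, so nothing about the exponent needs adjusting. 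Hence the whole argument reduces to checking that $\mathcal{A}^{k}$ satisfies the hypotheses imposed on $\mathcal{B}$ in Theorem~\ref{mincb}: that $\mathcal{A}^{k}$ is a nonnegative tensor (of some order $\geq 1$, so that Theorem~\ref{mincb} is applicable) and that $r_{i}(\mathcal{A}^{k})\neq 0$ for every $i\in[n]$.

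Nonnegativity of $\mathcal{A}^{k}$ is immediate: by the definition of the general product, every entry of $\mathcal{A}^{k}$ is a finite sum of products of entries of $\mathcal{A}$, and these are all nonnegative, so $\mathcal{A}^{k}$ is again a nonnegative tensor. The only substantive point is the nonvanishing of the row sums $r_{i}(\mathcal{A}^{k})$, which I would establish by induction on $k$.

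For the induction, the base case $k=1$ is exactly the hypothesis $r_{i}(\mathcal{A})\neq 0$. For the inductive step, assume $r_{i}(\mathcal{A}^{j})>0$ for all $i\in[n]$ (nonnegativity makes ``$\neq 0$'' the same as ``$>0$''). Applying Lemma~\ref{2.4}(2) to the nonnegative tensors $\mathcal{A}$ and $\mathcal{A}^{j}$ gives $r_{i}(\mathcal{A}^{j+1})=\sum_{i_{2},\ldots,i_{m}=1}^{n}a_{ii_{2}\cdots i_{m}}\,r_{i_{2}}(\mathcal{A}^{j})\cdots r_{i_{m}}(\mathcal{A}^{j})$. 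Because $r_{i}(\mathcal{A})=\sum_{i_{2},\ldots,i_{m}=1}^{n}a_{ii_{2}\cdots i_{m}}>0$, at least one coefficient $a_{ii_{2}\cdots i_{m}}$ is strictly positive; by the inductive hypothesis the accompanying factors $r_{i_{2}}(\mathcal{A}^{j}),\ldots,r_{i_{m}}(\mathcal{A}^{j})$ are all positive, so that summand is positive and therefore $r_{i}(\mathcal{A}^{j+1})>0$. This closes the induction and yields $r_{i}(\mathcal{A}^{k})\neq 0$ for all $k\geq 1$ and all $i\in[n]$.

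With both hypotheses verified, Theorem~\ref{mincb} applies to $\mathcal{A}$ and $\mathcal{B}=\mathcal{A}^{k}$ and delivers the claim. I expect the only real obstacle to be the bookkeeping in the induction establishing $r_{i}(\mathcal{A}^{k})\neq 0$; everything else is a direct substitution into the already-proved Theorem~\ref{mincb}. One should also confirm at the outset that $\mathcal{A}^{k+1}=\mathcal{A}\mathcal{A}^{k}$ holds as an identity of tensors (associativity, or consistency of the power notation, for the general product), since this is exactly what legitimizes rewriting $r_{i}(\mathcal{A}\mathcal{A}^{k})$ as $r_{i}(\mathcal{A}^{k+1})$.
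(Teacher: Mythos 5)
Your proposal is correct and follows essentially the same route as the paper: the paper obtains Theorem~\ref{3.4} by setting $\mathcal{B}=\mathcal{A}^{k}$ in Theorem~\ref{mincb}, and its accompanying remark verifies $r_{i}(\mathcal{A}^{k})\neq 0$ by iterating the row-sum formula of Lemma~\ref{2.4}(2), exactly as in your induction. Your write-up is in fact slightly more careful than the paper's on that nonvanishing step and on noting $\mathcal{A}\mathcal{A}^{k}=\mathcal{A}^{k+1}$, but the argument is the same.
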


\noindent\textbf{Remark }:Let $\mathcal{A}=(a_{i_{1}i_{2}\cdots i_{m}})\in \mathbb{R}^{[m,n]}_{+}$ and $r_{i}(\mathcal{A})\neq0$, for all $i\in[n] $. It follows from Lemma \ref{2.4} that ${r_i}({\mathcal{A}^{2}}) = \sum\limits_{{i_2}, \ldots ,{i_m} = 1}^n {{a_{i{i_2} \cdots {i_m}}}{r_{{i_2}}}({\mathcal{A}}) \cdots {r_{{i_m}}}({\mathcal{A}})}$. Since $r_{i}(\mathcal{A})\neq 0$,  we have $r_{i}(\mathcal{A}^{2})\neq 0$. Similarly, we obtain $r_{i}(\mathcal{A}^{3})\neq 0,\cdots,r_{i}(\mathcal{A}^{k})\neq 0$, for all $i \in [n]$. Thus, in Theorem \ref{3.4},
 $({{r_i}\left( \mathcal{A}^{k} \right)})^{m-1}\neq 0$, for all $i \in [n]$.

\vspace{1 ex}
Next we give the generalized Collatz-Wielandt Theorem of nonnegative weakly irreducible tensors.
\begin{thm}\label{3.5}
Let $\mathcal{A}\in{\mathbb{R}^{[m,n]}_{+}}$ be a weakly irreducible tensor. Then
\[\max_{\mathcal{B}\in{\mathbb{R}^{[k,n]}_{+}},r_{i}(\mathcal{B})\neq0} \mathop {\min }\limits_{i\in[n] } \frac{{{r_i}\left( {{\mathcal{A}\mathcal{B}}} \right)}}{({{r_i}\left( \mathcal{B} \right)})^{m-1}}=  \rho \left( \mathcal{A} \right) = \min_{\mathcal{B}\in{\mathbb{R}^{[k,n]}_{+}},r_{i}(\mathcal{B})\neq0} \mathop {\max }\limits_{i \in[n]} \frac{{{r_i}\left( {{\mathcal{A}\mathcal{B}}} \right)}}{({{r_i}\left( \mathcal{B} \right)})^{m-1}}.\]
where $k$ is any fixed positive integer.
\end{thm}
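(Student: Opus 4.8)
The plan is to sandwich $\rho(\mathcal{A})$ between the two extremal quantities using Theorem \ref{mincb}, and then to upgrade both outer inequalities to equalities by exhibiting one admissible nonnegative tensor $\mathcal{B}^{\ast}$ for which the ratio $r_i(\mathcal{A}\mathcal{B}^{\ast})/(r_i(\mathcal{B}^{\ast}))^{m-1}$ is constant in $i$ and equal to $\rho(\mathcal{A})$. First, applying Theorem \ref{mincb} to an arbitrary admissible $\mathcal{B}$ (i.e. $\mathcal{B}\in\mathbb{R}_+^{[k,n]}$ with $r_i(\mathcal{B})\ne 0$) and then taking the maximum, respectively the minimum, over all such $\mathcal{B}$ immediately yields
\[
\max_{\mathcal{B}}\min_{i\in[n]}\frac{r_i(\mathcal{A}\mathcal{B})}{(r_i(\mathcal{B}))^{m-1}}\le\rho(\mathcal{A})\le\min_{\mathcal{B}}\max_{i\in[n]}\frac{r_i(\mathcal{A}\mathcal{B})}{(r_i(\mathcal{B}))^{m-1}},
\]
so only the reverse inequalities remain to be established.

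For those, I would invoke Lemma \ref{pf}: since $\mathcal{A}$ is nonnegative and weakly irreducible, $\rho(\mathcal{A})$ is an eigenvalue with a positive eigenvector $x=(x_1,\ldots,x_n)^{\mathrm{T}}\in\mathbb{R}^n_{++}$, so that $(\mathcal{A}x)_i=\rho(\mathcal{A})x_i^{m-1}$ for every $i$. The key construction is a tensor $\mathcal{B}^{\ast}=(b_{i_1\cdots i_k})\in\mathbb{R}_+^{[k,n]}$ whose row sums reproduce the entries of $x$; the simplest choice is the diagonal tensor with $b_{i i\cdots i}=x_i$ and all other entries zero, giving $r_i(\mathcal{B}^{\ast})=x_i>0$ for all $i$, which in particular makes $\mathcal{B}^{\ast}$ admissible. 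By Lemma \ref{2.4}$(2)$,
\[
r_i(\mathcal{A}\mathcal{B}^{\ast})=\sum_{i_2,\ldots,i_m=1}^{n}a_{i i_2\cdots i_m}x_{i_2}\cdots x_{i_m}=(\mathcal{A}x)_i=\rho(\mathcal{A})x_i^{m-1},
\]
whence $r_i(\mathcal{A}\mathcal{B}^{\ast})/(r_i(\mathcal{B}^{\ast}))^{m-1}=\rho(\mathcal{A})$ for every $i\in[n]$. Thus at $\mathcal{B}=\mathcal{B}^{\ast}$ both the inner minimum and the inner maximum over $i$ equal $\rho(\mathcal{A})$, which yields
\[
\max_{\mathcal{B}}\min_{i\in[n]}\frac{r_i(\mathcal{A}\mathcal{B})}{(r_i(\mathcal{B}))^{m-1}}\ge\rho(\mathcal{A})\ge\min_{\mathcal{B}}\max_{i\in[n]}\frac{r_i(\mathcal{A}\mathcal{B})}{(r_i(\mathcal{B}))^{m-1}}.
\]
Combined with the first display, this forces equality throughout, and since the construction is valid verbatim for any fixed $k\ge 1$, the result holds for all $k$.

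The step I expect to carry the weight is conceptual rather than computational: recognizing that the positive Perron vector supplied by Lemma \ref{pf} can be encoded as the vector of row sums of a nonnegative tensor of the prescribed order $k$. Once this encoding is in hand, Lemma \ref{2.4}$(2)$ reduces $r_i(\mathcal{A}\mathcal{B}^{\ast})$ to $(\mathcal{A}x)_i$ and the eigen-equation collapses every ratio to the single value $\rho(\mathcal{A})$, so there is no genuine analytic obstacle. The only point requiring care is admissibility of the witness, namely $r_i(\mathcal{B}^{\ast})\ne 0$ for all $i$, which is exactly guaranteed by the strict positivity of $x$.
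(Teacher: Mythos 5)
Your proposal is correct and follows essentially the same route as the paper: both apply Theorem \ref{mincb} for the outer inequalities and use the positive Perron eigenvector $x$ from Lemma \ref{pf} to produce a witness $\mathcal{B}$ with $r_i(\mathcal{B})=x_i$, at which every ratio collapses to $\rho(\mathcal{A})$. The only difference is cosmetic: the paper merely asserts that such a $\mathcal{B}$ exists, whereas you exhibit the explicit diagonal tensor, which is a small improvement in rigor.
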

\begin{proof}
Since $\mathcal{A}$ is a nonnegative irreducible tensor, by Lemma \ref{pf},
$\rho \left( \mathcal{A} \right)$ is an eigenvalue of $\mathcal{A}$ and $x=(x_{1},\ldots ,x_{1})\in\mathbb{R}_{++}^{n}$ is the eigenvector corresponding to $\rho(\mathcal{A})$.
Then
\[\rho \left( \mathcal{A} \right)=\frac{{\sum\limits_{{i_2}, \ldots ,{i_m} =1}^{n} {{a_{i{i_2} \cdots {i_m}}}{x_{{i_2}}} \cdots {x_{{i_m}}}} }}{{{x_i}}^{m-1}},~for~all~i\in[n].\]
For any positive integer $k$, there exists a tensor $\mathcal{B}\in{\mathbb{R}^{[k,n]}_{+}}$, such that ${{r_i}\left( {{\mathcal{B}}} \right)}=x_{i}$, thus
 \[\rho \left( \mathcal{A} \right)=\frac{{\sum\limits_{{i_2},\ldots,{i_m} =1}^{n} {{a_{i{i_2} \cdots {i_m}}}{r_{{i_2}}}\left( \mathcal{B} \right) \cdots {r_{{i_m}}}\left( \mathcal{B} \right)} }}{({{r_i}\left( \mathcal{B} \right)})^{m-1}}=\frac{{{r_i}\left( {{\mathcal{A}\mathcal{B}}} \right)}}{({{r_i}\left( \mathcal{B} \right)})^{m-1}},\]
 by Theorem \ref{mincb}, we know
 \[\mathop {\min }\limits_{i\in[n] } \frac{{{r_i}\left( {{\mathcal{A}\mathcal{B}}} \right)}}{({{r_i}\left( \mathcal{B} \right)})^{m-1}}\leq \rho \left( \mathcal{A} \right) \leq \mathop {\max }\limits_{i \in[n]} \frac{{{r_i}\left( {{\mathcal{A}\mathcal{B}}} \right)}}{({{r_i}\left( \mathcal{B} \right)})^{m-1}}.\]
Then
\[\max_{\mathcal{B}\in{\mathbb{R}^{[k,n]}_{+}},r_{i}(\mathcal{B})\neq0} \mathop {\min }\limits_{i\in[n] } \frac{{{r_i}\left( {{\mathcal{A}\mathcal{B}}} \right)}}{({{r_i}\left( \mathcal{B} \right)})^{m-1}}=  \rho \left( \mathcal{A} \right) = \min_{\mathcal{B}\in{\mathbb{R}^{[k,n]}_{+}},r_{i}(\mathcal{B})\neq0} \mathop {\max }\limits_{i \in[n]} \frac{{{r_i}\left( {{\mathcal{A}\mathcal{B}}} \right)}}{({{r_i}\left( \mathcal{B} \right)})^{m-1}}.\]
\end{proof}
\noindent\textbf{Remark }:In Theorem \ref{3.5}, when $\mathcal{B}=x\in\mathbb{C}^{[1,n]}$, we obtain Collatz-Wielandt Theorem for nonnegative weakly irreducible tensors, which is showed in equality (\ref{mini}).

\section{ The eigenvalue inclusion sets of the general product of tensors   }
In this section, the bounds for spectral radius and the eigenvalue inclusion sets of the general product of two tensors are discussed.

\begin{thm}\label{4.1}
Let $\mathcal{A}\in \mathbb{R}^{[m,n]}_{+},~ \mathcal{B}\in \mathbb{R}^{[k,n]}_{+}$. Then
\[ r(\mathcal{A})( r(\mathcal{B}))^{m-1}\leq \rho(\mathcal{A}\mathcal{B})\leq R(\mathcal{A})(R(\mathcal{B}))^{m-1}.\]
\end{thm}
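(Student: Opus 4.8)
The plan is to sandwich $\rho(\mathcal{A}\mathcal{B})$ between the minimal and maximal row sums of $\mathcal{A}\mathcal{B}$ via Lemma \ref{bound}, and then to control those two extremal row sums using the explicit formula for $r_i(\mathcal{A}\mathcal{B})$ supplied by Lemma \ref{2.4}(2). First I would observe that since $\mathcal{A}$ and $\mathcal{B}$ are nonnegative, every entry of $\mathcal{A}\mathcal{B}$ is a sum of products of nonnegative reals, so $\mathcal{A}\mathcal{B}$ is itself a nonnegative tensor. Lemma \ref{bound} then immediately gives
\[r(\mathcal{A}\mathcal{B}) \leq \rho(\mathcal{A}\mathcal{B}) \leq R(\mathcal{A}\mathcal{B}),\]
and it only remains to estimate the two outer quantities in terms of $\mathcal{A}$ and $\mathcal{B}$ separately.

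For the upper estimate I would invoke Lemma \ref{2.4}(2) to write, for each $i\in[n]$,
\[r_i(\mathcal{A}\mathcal{B}) = \sum_{i_2,\ldots,i_m=1}^{n} a_{ii_2\cdots i_m}\, r_{i_2}(\mathcal{B})\cdots r_{i_m}(\mathcal{B}),\]
and then bound each factor by $r_{i_j}(\mathcal{B}) \leq R(\mathcal{B})$, pulling the $i$-independent constant $(R(\mathcal{B}))^{m-1}$ out of the sum to get $r_i(\mathcal{A}\mathcal{B}) \leq r_i(\mathcal{A})(R(\mathcal{B}))^{m-1} \leq R(\mathcal{A})(R(\mathcal{B}))^{m-1}$; maximizing over $i$ yields $R(\mathcal{A}\mathcal{B}) \leq R(\mathcal{A})(R(\mathcal{B}))^{m-1}$. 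The lower estimate is entirely symmetric: substituting $r_{i_j}(\mathcal{B}) \geq r(\mathcal{B})$ and $r_i(\mathcal{A}) \geq r(\mathcal{A})$ gives $r_i(\mathcal{A}\mathcal{B}) \geq r(\mathcal{A})(r(\mathcal{B}))^{m-1}$ for every $i$, hence $r(\mathcal{A}\mathcal{B}) \geq r(\mathcal{A})(r(\mathcal{B}))^{m-1}$. Chaining these two bounds with the displayed inequality above produces the claimed two-sided estimate.

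There is no genuine analytic obstacle here; the argument is a direct concatenation of the preparatory lemmas. The only point requiring a little care is the sandwiching step itself: one must apply the elementary bounds $r(\mathcal{B}) \leq r_{i_j}(\mathcal{B}) \leq R(\mathcal{B})$ uniformly inside the $(m-1)$-fold product \emph{before} factoring out $r_i(\mathcal{A})$, and then check that the resulting constants $(R(\mathcal{B}))^{m-1}$ and $(r(\mathcal{B}))^{m-1}$ are independent of $i$, so that they survive the passage to $\max_i$ and $\min_i$ respectively. The nonnegativity of the entries of $\mathcal{A}$ is exactly what legitimizes these monotone substitutions term by term inside the summation.
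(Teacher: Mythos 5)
Your proposal is correct and follows essentially the same route as the paper: both use the explicit formula from Lemma \ref{2.4}(2) for $r_i(\mathcal{A}\mathcal{B})$, bound the factors $r_{i_j}(\mathcal{B})$ termwise by $r(\mathcal{B})$ and $R(\mathcal{B})$, and then sandwich $\rho(\mathcal{A}\mathcal{B})$ between $r(\mathcal{A}\mathcal{B})$ and $R(\mathcal{A}\mathcal{B})$ via Lemma \ref{bound}. No substantive differences.
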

\begin{proof}
Let $\mathcal{A}=(a_{i_{1}i_{2}\ldots i_{m}})\in \mathbb{R}^{[m,n]}_{+}$, $\mathcal{B}\in \mathbb{R}^{[k,n]}_{+}$. By Lemma \ref{2.4}, we have \[r_{i}(\mathcal{A}\mathcal{B})= \sum\limits_{i_{2},\ldots,i_{m}=1}^{n}a_{ii_{2}\ldots i_{m}}r_{i_{2}}(\mathcal{B})\cdots r_{i_{m}}(\mathcal{B}) .\]
Clearly, $r_{i}(\mathcal{A}\mathcal{B}) \geq r_{i}(\mathcal{A})( r(\mathcal{B}))^{m-1}$, for all $i\in[n]$.
Let $ r_{j}(\mathcal{A}\mathcal{B})=r(\mathcal{A}\mathcal{B}),~j\in[n]$. Then we get
\[r(\mathcal{A}\mathcal{B})= r_{j}(\mathcal{A}\mathcal{B}) \geq r_{j}(\mathcal{A})( r(\mathcal{B}))^{m-1}\geq r(\mathcal{A})( r(\mathcal{B}))^{m-1}.\]
It follows from  Lemma \ref{bound} that $\rho(\mathcal{A}\mathcal{B})\geq  r(\mathcal{A}\mathcal{B})$. So we obtain \[\rho(\mathcal{A}\mathcal{B})\geq  r(\mathcal{A})(  r(\mathcal{B}))^{m-1}.\]
Similarly, we have $ \rho(\mathcal{A}\mathcal{B})\leq R(\mathcal{A}\mathcal{B})\leq R(\mathcal{A})(R(\mathcal{B}))^{m-1}$. Thus \[ r(\mathcal{A})( r(\mathcal{B}))^{m-1}\leq \rho(\mathcal{A}\mathcal{B})\leq R(\mathcal{A})(R(\mathcal{B}))^{m-1}.\]

\end{proof}

\noindent\textbf{Remark }:In Theorem \ref{4.1}, when $\mathcal{B}$ is an identity matrix,  we  also obtain $ r(\mathcal{A}) \leq \rho(\mathcal{A})\leq R(\mathcal{A})$ (see \cite{yy}).

\vspace{1 ex}
From Theorem \ref{4.1} and Corollary 2.5, we have the following results.
\begin{cor}
Let $\mathcal{A}\in \mathbb{C         }^{[m,n]}$. Then\\
$(1)$ $  \rho(\mathcal{A}^{k})\leq (R(\mathcal{A}))^{\mu_{k}};$\\
$(2)$ If $\mathcal{A}$ is a nonnegative tensor, we have
\[{(r(A))^{\mu_{k}}} \leq \rho(\mathcal{A}^{k})\leq (R(\mathcal{A}))^{\mu_{k}},\]
where  $\mu_{k}  = \left\{ {\begin{array}{*{20}{c}}
  {\begin{array}{*{20}{c}}
  {{{\frac{{{{(m - 1)}^k} - 1}}{{m - 2}}}}}&{,m > 2},
\end{array}} \\
  {\begin{array}{*{20}{c}}
  {{k}}&~~~~~~~~{,m = 2}.
\end{array}}
\end{array}} \right.$
\end{cor}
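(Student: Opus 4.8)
The plan is to run an induction on $k$ that reduces everything to a single application of the general product $\mathcal{A}^{k}=\mathcal{A}\mathcal{A}^{k-1}$, exploiting the recursion $\mu_{k}=1+(m-1)\mu_{k-1}$. First I would record this recursion: writing $\mu_{k}=\sum_{j=0}^{k-1}(m-1)^{j}$ (which equals $\frac{(m-1)^{k}-1}{m-2}$ when $m>2$ and $k$ when $m=2$), one checks at once that $1+(m-1)\mu_{k-1}=\mu_{k}$ in both branches, and that $\mathcal{A}^{k}$ has order $(m-1)^{k}+1$ so that every product $\mathcal{A}\,\mathcal{A}^{k-1}$ contributes the exponent $m-1$ coming from the first factor $\mathcal{A}$.

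For part $(1)$ the tensor $\mathcal{A}$ is only complex, so Theorem 4.1 and Lemma 2.3 (both for nonnegative tensors) are not available; the extra ingredient I would use is the elementary modulus estimate that every eigenvalue $\lambda$ of an arbitrary complex tensor $\mathcal{C}\in\mathbb{C}^{[p,n]}$ satisfies $|\lambda|\le R(\mathcal{C})$. This follows by taking a corresponding eigenvector $x$, choosing an index $i$ with $|x_{i}|=\max_{j}|x_{j}|>0$, and applying the triangle inequality to $\lambda x_{i}^{\,p-1}=(\mathcal{C}x^{p-1})_{i}$, which gives $|\lambda|\,|x_{i}|^{p-1}\le r_{i}(\mathcal{C})|x_{i}|^{p-1}$ and hence $|\lambda|\le r_{i}(\mathcal{C})\le R(\mathcal{C})$. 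Applying this with $\mathcal{C}=\mathcal{A}^{k}$ yields $\rho(\mathcal{A}^{k})\le R(\mathcal{A}^{k})$, and Corollary 2.5 then finishes part $(1)$: $\rho(\mathcal{A}^{k})\le R(\mathcal{A}^{k})\le (R(\mathcal{A}))^{\mu_{k}}$.

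For part $(2)$, with $\mathcal{A}$ nonnegative, the upper bound is a special case of part $(1)$. For the lower bound I would establish the lower counterpart of Corollary 2.5, namely $r(\mathcal{A}^{k})\ge (r(\mathcal{A}))^{\mu_{k}}$, by induction: Lemma 2.4$(2)$ gives $r_{i}(\mathcal{A}\mathcal{A}^{k-1})=\sum a_{i i_{2}\cdots i_{m}}\prod_{j=2}^{m}r_{i_{j}}(\mathcal{A}^{k-1})\ge r_{i}(\mathcal{A})(r(\mathcal{A}^{k-1}))^{m-1}$, whence $r(\mathcal{A}^{k})\ge r(\mathcal{A})(r(\mathcal{A}^{k-1}))^{m-1}\ge r(\mathcal{A})\big((r(\mathcal{A}))^{\mu_{k-1}}\big)^{m-1}=(r(\mathcal{A}))^{\mu_{k}}$. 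Equivalently one may apply Theorem 4.1 directly to $\mathcal{A}$ and $\mathcal{B}=\mathcal{A}^{k-1}$ to get $r(\mathcal{A})(r(\mathcal{A}^{k-1}))^{m-1}\le\rho(\mathcal{A}^{k})\le R(\mathcal{A})(R(\mathcal{A}^{k-1}))^{m-1}$ and then insert Corollary 2.5 and its lower analog. In either case Lemma 2.3 gives $\rho(\mathcal{A}^{k})\ge r(\mathcal{A}^{k})\ge (r(\mathcal{A}))^{\mu_{k}}$, completing $(2)$.

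I expect the main obstacle to be the complex case in part $(1)$: since the two cited tools (Theorem 4.1 and Corollary 2.5) really live in the nonnegative world, the genuinely new step is justifying $\rho(\mathcal{A}^{k})\le R(\mathcal{A}^{k})$ for a complex tensor, which the modulus-of-eigenvector argument above supplies. A secondary point requiring care is purely bookkeeping: verifying the exponent recursion $\mu_{k}=1+(m-1)\mu_{k-1}$ across the two cases $m>2$ and $m=2$, and confirming that the relevant exponent at each inductive step is $m-1$ (the order-minus-one of the left factor $\mathcal{A}$) independently of the order of $\mathcal{A}^{k-1}$.
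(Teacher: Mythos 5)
Your proposal is correct and follows essentially the route the paper intends: the paper states this corollary without a written proof, merely citing Theorem 4.1 and Corollary 2.5, and your argument supplies exactly the missing details (the recursion $\mu_{k}=1+(m-1)\mu_{k-1}$, the lower analogue $r(\mathcal{A}^{k})\ge (r(\mathcal{A}))^{\mu_{k}}$ via Lemma 2.4(2), and Lemma 2.3). Your one genuine addition is the observation that part (1) concerns a complex tensor, so the nonnegative-tensor bound $\rho\le R$ of Lemma 2.3 does not directly apply; the max-modulus eigenvector argument you give (equivalently, Qi's Ger\v{s}gorin-type inclusion set quoted as (4) in the paper) correctly closes that gap.
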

In\cite{Qi05}, Qi gave the Ger$\check{s}$gorin-type eigenvalue inclusion sets of tensors, i.e.,
\begin{align}\label{4}
 \sigma(\mathcal{A})\subseteq \bigcup_{i\in [n]}\{z\in \mathbb{C}: |z-a_{i\cdots i}|\leq r _{i}(\mathcal{A})-|a_{i\cdots i}|\}.
\end{align}
  For  $\mathcal{A}\in \mathbb{C}^{[m,n]}$ and  $\mathcal{B}\in \mathbb{C}^{[k,n]}$. By Lemma \ref{2.4}, it yields
\begin{align}\label{5}
  r _{i}(\mathcal{AB})\leq r_{i}(\mathcal{A})(R(\mathcal{B}))^{m-1}.
\end{align}
Denote by $c_{i\cdots i}$ the diagonal elements of tensor $\mathcal{AB}$, then
\begin{align}\label{6}
c_{i\cdots i}=\sum\limits_{i_{2},\ldots ,i_{m}=1 }^{n}a_{ii_{2}\cdots i_{m}}b_{i_{2}i\cdots i}\cdots b_{i_{m}i\cdots i},~i\in[n].
\end{align}
 Combining (\ref{4}), (\ref{5}), (\ref{6}),  we  obtain the eigenvalue inclusion sets for $\mathcal{AB}$ as follows.

\begin{thm}\label{4.3}
Let $\mathcal{A}=(a_{i_{1}i_{2}\cdots i_{m}})\in \mathbb{C}^{[m,n]}$ and $\mathcal{B}=(b_{i_{1}i_{2}\cdots i_{k}})\in \mathbb{C}^{[k,n]}$. Then
\[\sigma(\mathcal{AB})\subseteq  \mathbf{G}= \bigcup_{i\in [n]}\{z\in \mathbb{C}: |z-c_{i\cdots i}|\leq r _{i}(\mathcal{A})(R(\mathcal{B}))^{m-1}-|c_{i\cdots i}|\},\]
where $c_{i\cdots i}=\sum\limits_{i_{2},\ldots, i_{m}=1 }^{n}a_{ii_{2}\cdots i_{m}}b_{i_{2}i\cdots i}\cdots b_{i_{m}i\cdots i}.$
\end{thm}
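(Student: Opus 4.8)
The plan is to treat the product $\mathcal{AB}$ as a single tensor of order $(m-1)(k-1)+1$, apply Qi's Ger\v{s}gorin-type inclusion set (\ref{4}) to it verbatim, and then enlarge each resulting disk using the row-sum estimate (\ref{5}) supplied by Lemma \ref{2.4}. All three ingredients have already been assembled in the preceding paragraphs, so the argument is essentially a synthesis of (\ref{4}), (\ref{5}) and (\ref{6}) rather than anything new.

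First I would write $\mathcal{AB}=(c_{i\alpha_{1}\cdots\alpha_{m-1}})$ and apply (\ref{4}) to this tensor. Since each $\alpha_{j}$ ranges over $[n]^{k-1}$, the Ger\v{s}gorin row sum appearing in (\ref{4}) is exactly $\sum_{\alpha_{1},\ldots,\alpha_{m-1}\in[n]^{k-1}}|c_{i\alpha_{1}\cdots\alpha_{m-1}}|=r_{i}(\mathcal{AB})$, and the relevant diagonal entry is the one all of whose indices equal $i$, namely $c_{i\cdots i}$ with $\alpha_{1}=\cdots=\alpha_{m-1}=(i,\ldots,i)$. Substituting this index choice into the product formula identifies the diagonal entry with the expression in (\ref{6}). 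This already gives
\[\sigma(\mathcal{AB})\subseteq \bigcup_{i\in[n]}\{z\in\mathbb{C}:|z-c_{i\cdots i}|\le r_{i}(\mathcal{AB})-|c_{i\cdots i}|\}.\]

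Next I would enlarge each disk. By (\ref{5}) we have $r_{i}(\mathcal{AB})\le r_{i}(\mathcal{A})(R(\mathcal{B}))^{m-1}$ for every $i\in[n]$, and since the disk radius $r_{i}(\mathcal{AB})-|c_{i\cdots i}|$ is nondecreasing in $r_{i}(\mathcal{AB})$ while the center $c_{i\cdots i}$ stays fixed, replacing the row sum by its upper bound only enlarges the $i$th disk:
\[\{z:|z-c_{i\cdots i}|\le r_{i}(\mathcal{AB})-|c_{i\cdots i}|\}\subseteq\{z:|z-c_{i\cdots i}|\le r_{i}(\mathcal{A})(R(\mathcal{B}))^{m-1}-|c_{i\cdots i}|\}.\]
Taking the union over $i\in[n]$ then yields $\sigma(\mathcal{AB})\subseteq\mathbf{G}$, which is the assertion.

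The only point demanding care, and hence the main obstacle, is the index bookkeeping in the first step: one must confirm that the all-$i$ diagonal entry of the higher-order tensor $\mathcal{AB}$ is genuinely the quantity (\ref{6}), and that the row sum invoked by (\ref{4}) coincides with the $r_{i}(\mathcal{AB})$ controlled by Lemma \ref{2.4}. Once the diagonal index structure $\alpha_{1}=\cdots=\alpha_{m-1}=(i,\ldots,i)$ is pinned down, both identifications are immediate and the remainder is just the monotonicity observation above.
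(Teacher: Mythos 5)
Your proposal is correct and is exactly the argument the paper intends: the paper states Theorem \ref{4.3} as an immediate consequence of combining Qi's Ger\v{s}gorin-type inclusion (\ref{4}) applied to the tensor $\mathcal{AB}$, the row-sum bound (\ref{5}), and the diagonal-entry formula (\ref{6}). Your write-up merely makes explicit the index bookkeeping and the disk-enlargement monotonicity that the paper leaves implicit.
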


For a tensor $\mathcal{A}=(a_{i_1\cdots i_m })\in\mathbb{C}^{[m,n]}$, we associate with $\mathcal{A}$ a directed graph $\Gamma_{\mathcal{A}}$ as follows, $\Gamma_{\mathcal{A}}=(V(\mathcal{A}),E(\mathcal{A }))$, where $V(\mathcal{A})=\left\{1,\ldots,n\right\}$ is  vertex set and $E(\mathcal{A }) = \{(i,j):a_{ii_2\cdots i_m}\neq0,j\in \left\{ {{i_2}, \ldots ,{i_m}} \right\}\neq \left\{ {{i}, \ldots ,{i}} \right\}\}$ is arc set(see \cite{kpearson}, \cite{s.f perron}). If for each vertex $i \in V(\mathcal{A})$, there exists a circuit $\gamma$, such that $i$ belong to $\gamma$, then $\Gamma_{\mathcal{A}}$ is called weakly connected. We denote the set of circuits of $\Gamma_{\mathcal{A}}$ by $C(\mathcal{A})$.

\vspace{1 ex}
In \cite{bu}, Bu et al. gave the Brualdi-type eigenvalue inclusion sets via digraph as follows: Let $\mathcal{A}=(a_{i_{1}i_{2}\ldots i_{m}})\in \mathbb{C}^{[m,n]}$. If  $\Gamma_{\mathcal{A}}$ is weakly connected, then
\begin{align} \label{7}
\sigma(\mathcal{A})\subseteq \bigcup\limits_{\gamma\in C(\mathcal{A})}\{z\in \mathbb{C}: \prod\limits_{i\in \gamma}|z-a_{i\cdots i}|\leq \prod\limits_{i\in \gamma}(r_{i}(\mathcal{A})-|c_{i\cdots i}|)\}.
\end{align}
Next we give another eigenvalue inclusion sets for tensor $\mathcal{AB}$.

\begin{thm}\label{4.4}
Let $\mathcal{A}=(a_{i_{1}i_{2}\cdots i_{m}})\in \mathbb{C}^{[m,n]}$, $ \mathcal{B}=(b_{i_{1}i_{2}\cdots i_{k}})\in \mathbb{C}^{[k,n]}$ and $\Gamma_{\mathcal{A}\mathcal{B}}$ be weakly connected.  Then
\[\sigma(\mathcal{AB})\subseteq \mathbf{B}=\bigcup_{\gamma\in C(\mathcal{A}\mathcal{B})}\{z\in \mathbb{C}: \prod_{i\in \gamma}|z-c_{i\cdots i}|\leq \prod_{i\in \gamma}(r_{i}(\mathcal{A})(R(\mathcal{B}))^{m-1}-|c_{i\cdots i}|)\}\subseteq \mathbf{G},\]
where $c_{i\cdots i}=\sum\limits_{i_{2},\ldots ,i_{m}=1 }^{n}a_{ii_{2}\ldots i_{m}}b_{i_{2}i\cdots i}\cdots b_{i_{m}i\cdots i},$ $\mathbf{G}$ is given in Theorem $\ref{4.3}$.
\end{thm}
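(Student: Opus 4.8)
The plan is to prove the chain $\sigma(\mathcal{AB})\subseteq\mathbf{B}\subseteq\mathbf{G}$ by treating the two inclusions separately, invoking only the cited Brualdi-type set (\ref{7}), the Ger$\check{s}$gorin-type set of Theorem \ref{4.3}, and the row-sum estimate (\ref{5}). For the first inclusion, I would apply (\ref{7}) directly to the tensor $\mathcal{AB}$. Since $\mathcal{AB}$ is an order $(m-1)(k-1)+1$ dimension $n$ tensor whose diagonal entries are exactly the $c_{i\cdots i}$ of (\ref{6}), and since $\Gamma_{\mathcal{AB}}$ is assumed weakly connected, (\ref{7}) yields
\[\sigma(\mathcal{AB})\subseteq\bigcup_{\gamma\in C(\mathcal{AB})}\Big\{z\in\mathbb{C}:\prod_{i\in\gamma}|z-c_{i\cdots i}|\leq\prod_{i\in\gamma}\big(r_{i}(\mathcal{AB})-|c_{i\cdots i}|\big)\Big\}.\]

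Next I would enlarge this region using (\ref{5}). Each deleted row sum is nonnegative, because $|c_{i\cdots i}|\leq r_{i}(\mathcal{AB})\leq r_{i}(\mathcal{A})(R(\mathcal{B}))^{m-1}$: the first inequality holds since $c_{i\cdots i}$ is one of the summands defining $r_{i}(\mathcal{AB})$ (via Lemma \ref{2.4}), and the second is precisely (\ref{5}). Consequently, for every circuit $\gamma$ the factors satisfy $0\leq r_{i}(\mathcal{AB})-|c_{i\cdots i}|\leq r_{i}(\mathcal{A})(R(\mathcal{B}))^{m-1}-|c_{i\cdots i}|$, so replacing the former by the latter can only enlarge each set in the union. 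This gives $\sigma(\mathcal{AB})\subseteq\mathbf{B}$.

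For the inclusion $\mathbf{B}\subseteq\mathbf{G}$, I would set $R_{i}=r_{i}(\mathcal{A})(R(\mathcal{B}))^{m-1}-|c_{i\cdots i}|\geq0$ and take $z\in\mathbf{B}$, so that some circuit $\gamma$ satisfies $\prod_{i\in\gamma}|z-c_{i\cdots i}|\leq\prod_{i\in\gamma}R_{i}$. Arguing by contradiction, suppose $z\notin\mathbf{G}$; then $|z-c_{i\cdots i}|>R_{i}$ for every $i\in[n]$, in particular for every $i\in\gamma$. Multiplying these strict inequalities over $i\in\gamma$ (all left-hand factors are then positive and all $R_{i}$ are nonnegative) forces $\prod_{i\in\gamma}|z-c_{i\cdots i}|>\prod_{i\in\gamma}R_{i}$, contradicting the choice of $\gamma$. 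Hence some $i\in\gamma$ obeys $|z-c_{i\cdots i}|\leq R_{i}$, i.e. $z\in\mathbf{G}$, which establishes $\mathbf{B}\subseteq\mathbf{G}$.

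The whole argument is a matter of bookkeeping on top of the three cited facts, so I do not expect a serious obstacle. The one point I would check most carefully is the nonnegativity of the deleted row sums $R_{i}$, since this is exactly what legitimizes both ``enlarging each factor enlarges the region'' in the first step and the passage from the product inequality to a single-disk inequality in the second; without $R_{i}\geq0$ the contradiction step could fail. A secondary point to verify is that the hypothesis ``$\Gamma_{\mathcal{AB}}$ weakly connected'' is precisely the condition under which (\ref{7}) may be invoked for $\mathcal{AB}$.
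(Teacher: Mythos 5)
Your proof is correct and follows essentially the same route as the paper: apply the Brualdi-type set (\ref{7}) to $\mathcal{AB}$, enlarge each factor via (\ref{5}), and then prove $\mathbf{B}\subseteq\mathbf{G}$ by contradiction. You are in fact more careful than the paper, which compresses the first inclusion into ``combining (\ref{5}), (\ref{6}), (\ref{7})'' and omits the nonnegativity check $|c_{i\cdots i}|\leq r_{i}(\mathcal{AB})$ that legitimizes enlarging the factors.
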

\begin{proof}
Since $\mathcal{A}=(a_{i_{1}i_{2}\cdots i_{m}})\in \mathbb{C}^{[m,n]}$, $ \mathcal{B}=(b_{i_{1}i_{2}\cdots i_{k}})\in \mathbb{C}^{[k,n]}$ and $\Gamma_{\mathcal{A}\mathcal{B}}$ is weakly connected. Then combining (\ref{5}), (\ref{6}), (\ref{7}),  we  obtain
\[\sigma(\mathcal{AB})\subseteq \mathbf{B}=\bigcup_{\gamma\in C(\mathcal{A}\mathcal{B})}\{z\in \mathbb{C}: \prod_{i\in \gamma}|z-c_{i\cdots i}|\leq \prod_{i\in \gamma}(r_{i}(\mathcal{A})(R(\mathcal{B}))^{m-1}-|c_{i\cdots i}|)\}.\]
For any $z\in \mathbf{B} $, if $z \notin \mathbf{G} $, then \[|z-c_{i\cdots i}|\leq r _{i}(\mathcal{A})(R(\mathcal{B}))^{m-1}-|c_{i\cdots i}|, ~for ~all~ i \in [n].\]
Thus \[ \prod\limits_{i\in \gamma}|z-c_{i\cdots i}|> \prod\limits_{i\in \gamma}(r_{i}(\mathcal{A})(R(\mathcal{B}))^{m-1}-|c_{i\cdots i}|), ~for ~all~ \gamma \in C(\mathcal{A}), \]
this is a contradiction to $z\in \mathbf{B} $.
Therefore $z \in \mathbf{G} $, i.e., $\mathbf{B} \subseteq \mathbf{G}$.
\end{proof}

\vspace{3mm}
\noindent
\textbf{References}

\end{spacing}

\begin{thebibliography}{}
\bibitem{Qi05}L. Qi, Eigenvalues of a real supersymmetric tensor, J. Symbolic Comput. 40 (2005) 1302-1324.
\bibitem{lim} L.H. Lim, Singular values and eigenvalues of tensors: a variational approach, in: Proceedings of the IEEE International Workshop on Computational Advances of Multitensor Adaptive Processing,（2005）129-132.
\bibitem{shao}J.Y. Shao, A general product of tensors with applications, Linear Algebra Appl. 439 (2013) 2350-2366.
\bibitem{ch perron} K.C. Chang, K. Pearson, T. Zhang, Perron-Frobenius Theorem for nonnegative tensors, Commun. Math. Sci. 6 (2008) 507-520.
\bibitem{s.f perron} S. Friedland, S. Gaubert, L. Han, Perron-Frobenius Theorem for nonnegative multilinear forms and extensions, Linear Algebra Appl. 438 (2013) 738-749.
\bibitem{yy} Y. Yang and Q. Yang, Further results for Perron-Frobenius Theorem for nonnegative tensors, SAIM. J. Matrix Anal. Appl. 31 (2010) 2517-2530.
\bibitem{yy2} Y. Yang and Q. Yang, On some properties of nonnegative weakly tensors, aXiv:1111.0713.
\bibitem{kpearson} K. Pearson and T. Zhang, On spectral hypergraph theory of the adjacency tensor, Graphs and Combin. 30 (2014) 1233-1248.
\bibitem{liwensome} W. Li and M.K. Ng, Some bounds for the spectral radius of nonnegative tensors, Numerische Mathematik, 130 (2015） 315-335.
\bibitem{minc} H. Minc, Nonnegative Matrices, Wiley, New York 1988.
\bibitem{sunlizhu} L. Sun, B. Zheng, Y. Wei,  C. Bu,  Characterizations of the spectral radius of nonnegative weakly irreducible tensors via a digraph, Linear Multilinear Algebra 64 (2016) 737-744.
\bibitem{bu} C. Bu, Y.P. Wei, L. Sun, J. Zhou, Brauldi-type eigenvalue inclusion sets of tensors, Linear Algebra Appl. 480 (2015) 168-175.

\end{thebibliography}
\end{document}